\pgfplotsset{compat=1.18}
\newtheorem{thm}{Theorem}[section]
\newtheorem{lem}[thm]{Lemma}
\newtheorem{claim}{Claim}
\newtheorem{conj}[thm]{Conjecture}
\theoremstyle{definition}
\numberwithin{equation}{section}
\begin{document}

\title{Proof of a conjectured spectral upper bound on the chromatic number of a graph}

\author[Q.~Tang]{Quanyu Tang}
\author[C.~Elphick]{Clive Elphick}

\address{School of Mathematics and Statistics, Xi'an Jiaotong University, Xi'an 710049, P. R. China}
\email{tang\_quanyu@163.com, tangquanyu827@gmail.com}
\address{School of Mathematics, University of Birmingham, Birmingham, UK}
\email{clive.elphick@gmail.com}

\subjclass[2020]{Primary 05C50.}

\keywords{Chromatic number; Least adjacency eigenvalue; Spectral graph theory}

\begin{abstract}
Let $G$ be a simple graph on $n$ vertices and $m$ edges with chromatic number $\chi$, and let $\lambda_n$ denote the least adjacency eigenvalue. Solving a conjecture of Fan, Yu and Wang~[Electron. J. Combin., 2012], we prove that when $3\le \chi\le n-1$, the chromatic number satisfies the following upper bound:
$$
\chi \le \left(\frac{n}{2}+1+\lambda_n\right) +
\sqrt{\left(\frac{n}{2}+1+\lambda_n\right)^{2}-4(\lambda_n+1)\left(\lambda_n+\frac{n}{2}\right)},
$$
with equality if and only if $G \cong \left(K_{\frac{\chi}{2}}\cup\tfrac{n-\chi}{2}K_1\right) \vee \left(K_{\frac{\chi}{2}}\cup\tfrac{n-\chi}{2}K_1\right)$, where both $n$ and $\chi$ are even. This extends the validity of Fan--Yu--Wang's bound from the range $3\le \chi\le \frac{n}{2}$ to the full range $3\le \chi\le n-1$.

We also compare this bound with the well-known bound due to Wilf that $\chi \le 1 + \lambda_1$, where $\lambda_1$ denotes the largest eigenvalue. In particular we show that while Wilf's bound is an upper bound for some parameters larger than $\chi$, this bound using $\lambda_n$ is not an upper bound for these parameters. We conclude with a similar conjectured upper bound for $\chi(G)$, which uses $m$ in place of $n$.  
\end{abstract}

\maketitle

\section{Introduction}

Let $G=(V,E)$ be a simple graph of order $n$ with vertex set $V=V(G)$ and edge set $E=E(G)$. The \emph{adjacency matrix} of $G$ is the $n\times n$ $0$--$1$ matrix $A(G)=[a_{ij}]$, where $a_{ij}=1$ if $v_i$ is adjacent to $v_j$ and $a_{ij}=0$ otherwise. 
The eigenvalues of $A(G)$ are called the \emph{eigenvalues} of the graph $G$. 
Since $A(G)$ is symmetric, all eigenvalues are real; we list them as
\[
\lambda_1(G)\ge \lambda_2(G)\ge \cdots \ge \lambda_n(G).
\]

The \emph{chromatic number} of a graph $G$, denoted by $\chi(G)$, is the least number of colors needed to color $V(G)$ so that adjacent vertices receive distinct colors. There has been extensive research on the connection between eigenvalues and the chromatic number of a graph. Wilf~\cite{W67} proved that $\chi(G)\le 1+\lambda_1(G)$. For connected graphs, equality holds if and only if $G$ is a complete graph or an odd cycle. Hoffman~\cite{H70} showed that $\chi(G)\ge 1-\frac{\lambda_1(G)}{\lambda_n(G)}$. Elphick, Tang and Zhang~\cite{ETZ26} proved lower bounds on  chromatic numbers in terms of $p$-energies, which substantially generalize the Hoffman bound.

We use the following notation.
Let $K_n$ (resp.~$O_n$) denote the complete (resp.~empty) graph on $n$ vertices; note that $O_n = nK_1$, the disjoint union of $n$ isolated vertices. For two vertex-disjoint graphs $H_1=(V_1,E_1)$ and $H_2=(V_2,E_2)$, their \emph{disjoint union} is 
\[
H_1\cup H_2\quad\text{with}\quad V(H_1\cup H_2)=V_1\cup V_2,\ \ E(H_1\cup H_2)=E_1\cup E_2,
\]
and their \emph{join} is
\[
H_1\vee H_2\quad\text{with}\quad V(H_1\vee H_2)=V_1\cup V_2,\ \ 
E(H_1\vee H_2)=E_1\cup E_2\cup\{uv:\ u\in V_1,\ v\in V_2\}.
\]
(Equivalently, every vertex of $H_1$ is made adjacent to every vertex of $H_2$.) 
We also write $K_{a,b}$ for the complete bipartite graph with parts of sizes $a$ and $b$; note that $K_{a,b}=O_a\vee O_b$.

It is immediate that $\chi(G)=n$ if and only if $G=K_n$; $\chi(G)=1$ if and only if $G=O_n$. Moreover, $\chi(G)=2$ precisely when $G$ is bipartite. 
A result of Constantine~\cite{C85} (see also~\cite{FYW12}) shows that among all $n$-vertex graphs (or all $n$-vertex bipartite graphs), the unique graph minimizing the least eigenvalue is the complete bipartite graph $K_{\lceil n/2\rceil,\lfloor n/2\rfloor}=O_{\lceil n/2\rceil}\vee O_{\lfloor n/2\rfloor}$. 
Accordingly, the present work focuses on graphs with $\chi(G) \ge 3$.

In~\cite[Theorems~3.1 and 3.2]{FYW12}, Fan, Yu and Wang established an upper bound for the chromatic number of a graph in terms of its order and least adjacency eigenvalue, which also appears in Stani\'c's book~\cite[Theorem~3.11]{St15}.

\begin{thm}[\cite{FYW12}]\label{thm:FYW}
Let $G$ be a graph of order $n$ with chromatic number $3\leq \chi \leq \frac{n}{2}$ and least adjacency eigenvalue $\lambda_n$.
Then
$$
\chi \le \left(\frac{n}{2}+1+\lambda_n\right) +
\sqrt{\left(\frac{n}{2}+1+\lambda_n\right)^2 - 4(\lambda_n+1)\left(\lambda_n + \frac{n}{2}\right)},
$$
with equality if and only if $G \cong \left(K_{\frac{\chi}{2}}\cup\tfrac{n-\chi}{2}K_1\right) \vee \left(K_{\frac{\chi}{2}}\cup\tfrac{n-\chi}{2}K_1\right)$, where both $n$ and $\chi$ are even.
\end{thm}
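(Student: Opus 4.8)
The plan is to turn the stated inequality into a lower bound on $\lambda_n$ and then prove that bound by finding the extremal graph. First I would clear the radical. With $a=\frac n2+1+\lambda_n$ and $c=4(\lambda_n+1)\left(\lambda_n+\frac n2\right)$, the claim is that $\chi$ does not exceed the larger root $a+\sqrt{a^2-c}$ of $t^2-2at+c$, and this is implied by $\chi^2-2a\chi+c\le 0$. A direct expansion (substituting for $a$ and $c$) shows that the number $\chi^2-2a\chi+c$ equals
\[
P(\lambda_n):=4\lambda_n^{2}+2(n+2-\chi)\lambda_n+(\chi-2)(\chi-n),
\]
so it suffices to prove $P(\lambda_n)\le 0$ (which, being an upward parabola in $\chi$ attaining a nonpositive value, also forces $a^2\ge c$, so the radical is real). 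Reading $P$ as a quadratic in $\lambda_n$, its constant term $(\chi-2)(\chi-n)$ is negative for $3\le\chi\le\frac n2$, so $P$ has roots $\rho_-<0<\rho_+$; because $\chi\ge 3$ forces an edge and hence $\lambda_n<0\le\rho_+$, the upper inequality $\lambda_n\le\rho_+$ holds for free. Thus the entire theorem collapses to the single lower bound $\lambda_n\ge\rho_-$, namely that $\rho_-$ is the smallest possible least eigenvalue of an $n$-vertex graph of chromatic number $\chi$.

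Second, I would establish $\lambda_n\ge\rho_-$ as an extremal result: minimize $\lambda_n$ over all $n$-vertex graphs with $\chi(G)=\chi$ and show the minimizer is $H^{\ast}:=\left(K_{\chi/2}\cup\frac{n-\chi}{2}K_1\right)\vee\left(K_{\chi/2}\cup\frac{n-\chi}{2}K_1\right)$. Let $G$ be a minimizer with unit $\lambda_n$-eigenvector $x$, so $\lambda_n=x^{\top}Ax$. The contribution of an edge $uv$ to $x^{\top}Ax$ is $2x_ux_v$, negative exactly when $x_u,x_v$ have opposite signs; hence a minimizer wants every edge between the two sign-classes of $x$ and as few edges as possible inside each class. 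A switching argument along these lines forces $G$ to be a join $F_1\vee F_2$ of its sign-classes, where each $F_i$ carries the fewest edges compatible with its chromatic number. Since $\chi(F_1\vee F_2)=\chi(F_1)+\chi(F_2)$ and a graph of chromatic number $s$ has at least $\binom s2$ edges, with equality only for $K_s$ together with isolated vertices, each $F_i\cong K_{s_i}\cup(n_i-s_i)K_1$ with $s_1+s_2=\chi$ and $n_1+n_2=n$. It then remains to minimize $\lambda_n$ of this finite parametric family over the integer splits $(s_1,s_2,n_1,n_2)$.

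Third, I would carry out that parametric minimization. For each split the partition into the two cliques and the two independent sets is equitable, so $\lambda_n$ of $F_1\vee F_2$ is the least eigenvalue of an explicit $4\times 4$ quotient matrix; optimizing over $(s_1,s_2,n_1,n_2)$ drives the configuration to the balanced point $s_1=s_2=\frac\chi2$, $n_1=n_2=\frac n2$, i.e.\ to $H^{\ast}$. Evaluating there, the eigenvectors antisymmetric under the part-swap reduce the quotient matrix to a $2\times 2$ block, and one checks directly that the resulting least eigenvalue satisfies $P(\lambda_n(H^{\ast}))=0$, so $\lambda_n(H^{\ast})=\rho_-$. This yields $\lambda_n(G)\ge\rho_-$ with equality iff $G\cong H^{\ast}$; since $H^{\ast}$ is defined only when $\frac\chi2$ and $\frac{n-\chi}2$ are integers, equality forces $n$ and $\chi$ even. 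Translating back through the first paragraph recovers the stated upper bound on $\chi$ with its equality characterization.

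I expect the structural reduction of the second paragraph to be the main obstacle, for two reasons. The switches must keep $\chi(G)$ pinned at exactly $\chi$, yet adding or deleting edges can change the chromatic number, so one must modify only edges outside a fixed $\chi$-critical subgraph and argue that the sign-class join can be reached without disturbing $\chi$. Moreover the passage to the \emph{balanced} split is a genuine spectral comparison of unbalanced joins rather than a formal step, in the spirit of Constantine's bipartite extremal theorem. The hypothesis $\chi\le\frac n2$ enters precisely here: it is equivalent to $\frac{n-\chi}2\ge\frac\chi2$, i.e.\ to the independent sides being at least as large as the clique sides, which is what makes $H^{\ast}$ the true minimizer; once $\chi>\frac n2$ a different graph attains the minimum least eigenvalue, which is exactly why extending the bound to $3\le\chi\le n-1$ requires a new argument.
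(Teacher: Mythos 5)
Your first paragraph is correct and is essentially the same translation the paper uses: expanding $\chi^2-2a\chi+c$ does give $P(\lambda_n)=4\lambda_n^{2}+2(n-\chi+2)\lambda_n+(\chi-2)(\chi-n)$, and your $\rho_-$ is exactly the paper's $\xi$, the smaller root of $\lambda^2+(1+p)\lambda-pq=0$ with $p=\tfrac{n-\chi}{2}$, $q=\tfrac{\chi}{2}-1$ (Theorem~\ref{thm:main1}). Your second paragraph is a sketch of the structural lemma that the paper simply cites from Fan--Yu--Wang (Lemma~\ref{lem:FYW21}): every minimizer of $\lambda_n$ has the form $(K_a\cup O_{a_0})\vee(K_b\cup O_{b_0})$. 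Your sign-class/switching mechanism is the right idea there, and you correctly flag that keeping $\chi$ fixed is the delicate point. The genuine gap is in your third paragraph: the sentence ``optimizing over $(s_1,s_2,n_1,n_2)$ drives the configuration to the balanced point'' \emph{is} the theorem, and you give no argument for it. One must prove that over all feasible splits $a+b=\chi$, $a_0+b_0=n-\chi$, the least root of the quartic $f(a,a_0,\lambda)$ in~\eqref{eq:quartic} is minimized at $a=b=\tfrac{\chi}{2}$, $a_0=b_0=\tfrac{n-\chi}{2}$. Comparing least roots across a two-parameter family of quartics is not a routine finite check (the family is not finite, and the root is not a tractable closed-form function of the split); this is precisely where Fan--Yu--Wang spent their effort with a calculus/implicit-function argument, and where the present paper spends its effort algebraically: Lemma~\ref{lem:unique} (Descartes' rule) shows each quartic has a unique negative root, so it suffices to prove $f(a,a_0,\xi)\ge 0$; then the identity of Claim~\ref{clm:Delta-again} together with $\xi(\xi+1)=p(q-\xi)$ rewrites $\Delta(s,t;\xi)=(\beta^2-t^2)s^2+2p\beta ts+p\alpha t^2$, which is handled by completing the square. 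Some argument of this kind is indispensable; without it your proposal establishes nothing beyond the (cited) structural reduction.

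A secondary but substantive error: your closing diagnosis of where the hypothesis $\chi\le\tfrac n2$ enters is contradicted by the paper. It is not true that ``once $\chi>\tfrac n2$ a different graph attains the minimum least eigenvalue'': Theorem~\ref{thm:main1} shows the \emph{same} balanced graph $\left(K_{\chi/2}\cup\tfrac{n-\chi}{2}K_1\right)\vee\left(K_{\chi/2}\cup\tfrac{n-\chi}{2}K_1\right)$ is extremal throughout the full range $3\le\chi\le n-1$, and the bound with the same equality characterization persists there. The restriction to $\chi\le\tfrac n2$ in Fan--Yu--Wang was a limitation of their calculus-based method, not a structural threshold where the extremal configuration changes. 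This matters for your proposal because your unproved optimization step leans on exactly this (incorrect) intuition about the independent sides needing to be at least as large as the clique sides, so even as a heuristic the mechanism you propose for the key step points in the wrong direction.
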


Fan, Yu and Wang~\cite{FYW12} conjectured that the above bound continues to hold for 
$\frac{n}{2}<\chi\le n-1$. In this paper we resolve this conjecture.

Let us briefly explain the new difficulty in the range $\chi>n/2$. After reducing the problem to the two-parameter family $G(a,a_0)$, the proof in~\cite[Section~3]{FYW12} treats $a$ and $a_0$ as real variables and compares $f(a,a_0,\lambda)$ with $f\left(\frac{\chi}{2},\frac{n-\chi}{2},\lambda\right)$ for fixed $\lambda\in(-n/2,-1)$. This comparison is obtained from sign estimates for partial derivatives, and these estimates use inequalities that follow from $\chi\le n/2$. For example, the positivity of $n-\chi+1+\lambda$ follows in that range from $\lambda>-n/2$ and $\chi\le n/2$. When $\chi>n/2$, such sign information is no longer uniform over the feasible region, so the same calculus argument does not seem to yield the desired minimization directly.

Our main observation is that the root comparison can instead be made at a single point. Lemma~\ref{lem:unique} shows that every relevant quartic has exactly one negative real root. Hence, if $\xi$ denotes the negative root of the balanced polynomial $f\left(\frac{\chi}{2},\frac{n-\chi}{2},\lambda\right)$, then it is enough to prove $f(a,a_0,\xi)\ge 0$ for every feasible pair $(a,a_0)$. After centering the parameters at the balanced point, the difference $f(a,a_0,\lambda)-f\left(\frac{\chi}{2},\frac{n-\chi}{2},\lambda\right)$ has an explicit form. Substituting the quadratic equation satisfied by $\xi$ reduces the desired nonnegativity to a short completion-of-squares argument.

\section{Preliminaries}

We follow the notation in~\cite{FYW12}. Given positive integers $n$ and $\chi$ with $3\le \chi\le n-1$, 
we introduce a family of graphs defined by
\[
G(a,a_0) := \left(K_a\cup O_{a_0}\right) \vee \left(K_b\cup O_{b_0}\right),
\]
where 
\begin{equation}\label{eq:feasible1}
1\le a\le \chi-1,\qquad b=\chi-a,\qquad 
a_0,b_0\ge0,\qquad a_0+b_0=n-\chi.
\end{equation}
We call a pair $(a,a_0)$ \emph{feasible} if the integers $a$ and $a_0$ satisfy~\eqref{eq:feasible1}. It is immediate that $\chi(G(a,a_0))=\chi$, and since $G(a,a_0)$ is connected but not complete, we have $\lambda_n(G(a,a_0))<-1$.

By~\cite[Eq.~(2.2)]{FYW12}, 
the least eigenvalue $\lambda_n(G(a,a_0))$ is the smallest real root of the quartic polynomial:
\begin{equation}\label{eq:quartic}
f(a,a_0,\lambda) := \lambda^2(\lambda-a+1)(\lambda-b+1)
-\bigl[(b+b_0)\lambda-b_0(b-1)\bigr]
  \bigl[(a+a_0)\lambda-a_0(a-1)\bigr],
\end{equation}
where $b=\chi-a$ and $b_0=n-\chi-a_0$, and throughout the paper $n$ and $\chi$ are regarded as fixed numbers.

We recall the following structural lemma from~\cite[Lemma~2.1]{FYW12}.

\begin{lem}[\cite{FYW12}]\label{lem:FYW21}
Among all graphs of order~$n$ and chromatic number~$\chi$, where $3\le \chi\le n-1$, if a graph~$G$ is one whose least eigenvalue attains the minimum, 
then $G$ must be of the form $G(a,a_0)$ for some $a$ and~$a_0$.
\end{lem}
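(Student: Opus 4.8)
The plan is to work with an extremal graph $G$, that is, one minimizing $\lambda_n$ among all graphs of order $n$ with $\chi(G)=\chi$, and to read off its structure from a least-eigenvector. Fix a unit eigenvector $\mathbf{x}=(x_v)_{v\in V(G)}$ for $\lambda:=\lambda_n(G)$; since the minimum is at most $\lambda_n(G(a,a_0))<-1$ for any feasible $(a,a_0)$, we have $\lambda<-1$. Writing $\lambda=\mathbf{x}^{\top}A(G)\mathbf{x}=2\sum_{uv\in E(G)}x_ux_v$, the guiding principle is that an edge joining two vertices of opposite sign contributes a negative term and is desirable, whereas an edge joining two vertices of the same sign contributes a positive term and should be avoided. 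The only reason the minimizer is not simply complete bipartite (which would force $\chi\le 2$) is the constraint $\chi\ge 3$, so the entire difficulty is to add as many opposite-sign edges and as few same-sign edges as possible while pinning the chromatic number at exactly $\chi$.

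After a short reduction to the case that $G$ is connected, I would partition $V(G)=V^{+}\cup V^{-}\cup V^{0}$ by the sign of $x_v$ and first show $V^{0}=\varnothing$: a zero entry at $v$ together with the eigenequation $\lambda x_v=\sum_{u\sim v}x_u$ exposes a configuration that can be perturbed to strictly decrease $\lambda$ without changing $\chi$. I would then establish the \emph{join structure} $G=G[V^{+}]\vee G[V^{-}]$, i.e.\ every vertex of $V^{+}$ is adjacent to every vertex of $V^{-}$. The decisive tool here is the identity $\chi(H_1\vee H_2)=\chi(H_1)+\chi(H_2)$ together with the monotone bound $\chi(G)\le\chi(G[V^{+}])+\chi(G[V^{-}])$: a missing cross edge $uw$ with $u\in V^{+}$, $w\in V^{-}$ can be added to strictly lower the Rayleigh quotient, and one must argue that, for the extremal configuration, this insertion keeps the chromatic number equal to $\chi$, contradicting minimality. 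This pins $a:=\chi(G[V^{+}])$ and $b:=\chi(G[V^{-}])$ with $a+b=\chi$.

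With the two-block join in hand, it remains to determine the structure inside each block. Because same-sign edges only raise $\lambda$, the induced graph $G[V^{+}]$ should carry the \emph{fewest} and \emph{lightest} edges consistent with $\chi(G[V^{+}])=a$, and symmetrically for $G[V^{-}]$ with $b=\chi-a$. Here I would run a twinning/neighborhood-equalization argument: collapsing two same-sign vertices onto a common neighborhood—making them false twins when they sit in an independent part, or true twins when they sit in a clique part—does not increase $\lambda$, and for false twins it is chromatic-number neutral. Iterating degenerates each block to at most two vertex types, a clique of mutually true-twin vertices and an independent set of mutually false-twin vertices with no edges between them, so $G[V^{+}]\cong K_a\cup O_{a_0}$ and $G[V^{-}]\cong K_b\cup O_{b_0}$. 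Assembling the pieces gives $G\cong(K_a\cup O_{a_0})\vee(K_b\cup O_{b_0})=G(a,a_0)$, and the feasibility ranges $1\le a\le\chi-1$, $b=\chi-a$, $a_0,b_0\ge 0$, $a_0+b_0=n-\chi$ follow from $a,b\ge 1$ and $\lvert V^{+}\rvert+\lvert V^{-}\rvert=n$.

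The step I expect to be the main obstacle is maintaining the constraint $\chi(G)=\chi$ \emph{exactly} throughout every modification: each move designed to decrease $\lambda$—inserting an opposite-sign edge, or deleting and rerouting a same-sign edge—a priori risks nudging the chromatic number off its target, so only chromatic-number-neutral moves are admissible. The way around this is to express every modification as either a cross-edge insertion inside an (eventually) complete join, where the join identity forces $\chi$ to stay at $\chi(G[V^{+}])+\chi(G[V^{-}])$, or a false-twin duplication inside a block, which provably leaves $\chi$ unchanged, while the two cliques $K_a$ and $K_b$ serve as the rigid skeleton fixing $\chi=a+b$. Verifying that this restricted repertoire of moves both preserves $\chi$ and suffices to drive $G$ to the canonical form $G(a,a_0)$—so that no genuinely cheaper configuration escapes the reduction—is the delicate heart of the argument.
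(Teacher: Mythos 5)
You should first note that the paper does not prove Lemma~\ref{lem:FYW21} at all: it is quoted from \cite[Lemma~2.1]{FYW12}, so your attempt must be measured against the argument in that reference. Your proposal guesses the right shape of that argument (least eigenvector, sign partition, join across signs, clique-plus-independent-set inside each side), but both mechanisms you propose for executing it fail at precisely the point you yourself flag. The central gap is $\chi$-preservation when inserting a cross edge: adding $uw$ with $x_u>0>x_w$ does lower the Rayleigh quotient, but it can raise the chromatic number from $\chi$ to $\chi+1$ (exactly when every proper $\chi$-coloring of $G$ gives $u$ and $w$ the same color), and then the new graph leaves the family and no contradiction with minimality results. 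Your proposed repair via the identity $\chi(G[V^+]\vee G[V^-])=\chi(G[V^+])+\chi(G[V^-])$ is circular: that identity is available only \emph{after} the join structure is established, whereas beforehand one only has $\chi(G)\le\chi(G[V^+])+\chi(G[V^-])$, which can be strict. The device that actually resolves this in \cite{FYW12} is to work with a fixed proper $\chi$-coloring $V_1,\dots,V_\chi$ of the extremal graph: an edge added between two \emph{distinct color classes} keeps that coloring proper (so $\chi$ stays $\le\chi$) while edge addition never decreases $\chi$, hence $\chi$ is preserved exactly. Even then one does not immediately get the full join, because an opposite-sign pair lying inside a single ``straddling'' color class cannot be completed this way; handling such classes is a real step (one route: never modify $G$, but instead build a comparison graph $H=(K_a\cup O_{a_0})\vee(K_b\cup O_{b_0})$, which has chromatic number $\chi$ \emph{by construction}, verify $\mathbf{x}^\top A(H)\mathbf{x}\le\mathbf{x}^\top A(G)\mathbf{x}$, and then invoke minimality and analyze tightness). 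Your sketch does not notice this case at all.

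The second gap is your block reduction. Making $u$ a false twin of $u'$ is \emph{not} chromatic-number neutral: the resulting graph $G'$ satisfies $\chi(G')=\chi(G-u)$, which may equal $\chi-1$ if $u$ was essential, so iterated twinning can also leave the family; moreover, the Rayleigh quotient changes by $2\lambda_n x_u(x_{u'}-x_u)$, whose sign depends on the relative sizes of the entries, so the collapse goes the wrong way for one of the two orientations. What actually forces $G[V^+]\cong K_a\cup O_{a_0}$ is a different pair of facts: (i) in an optimal coloring there must be at least one edge between every two color classes (otherwise merging them would give a proper $(\chi-1)$-coloring, contradicting $\chi(G)=\chi$), and (ii) for fixed positive weights $x_v$ on $V^+$, charging each required inter-class edge against the product of the minimum entries of its two classes shows that the cheapest graph of chromatic number $a$ is a clique on the $a$ smallest entries plus isolated vertices. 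This counting-and-monotonicity argument, not twinning, pins down the blocks. Finally, the claim $V^0=\varnothing$ rests on an unspecified perturbation and is unnecessary: the standard proof simply folds zero entries into $V^+$ and defines $V^-=\{v:x_v<0\}$.
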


We also need the following lemma concerning the number of negative roots of the polynomial $f(a,a_0,\lambda)$ defined in~\eqref{eq:quartic}.


\begin{lem}\label{lem:unique}
For every feasible pair $(a,a_0)$, the polynomial $f(a,a_0,\lambda)$ has exactly one negative real root (counted with multiplicity). Moreover, $f(a,a_0,0)\le 0$ and $\lim_{\lambda\to-\infty} f(a,a_0,\lambda)=+\infty$.
\end{lem}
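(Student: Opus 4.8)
The plan is to expand $f(a,a_0,\lambda)$ into a standard quartic in $\lambda$, read off the signs of its coefficients, and then invoke Descartes' rule of signs. Writing $p=a+a_0$ and $q=b+b_0$ (so that $p+q=n$ and $a+b=\chi$), a direct expansion gives
$$
f(a,a_0,\lambda)=\lambda^4-(\chi-2)\lambda^3+c_2\lambda^2+c_1\lambda+c_0,
$$
where $c_2=(a-1)(b-1)-pq$, $c_1=q\,a_0(a-1)+p\,b_0(b-1)$, and $c_0=-a_0b_0(a-1)(b-1)$. The two ``moreover'' claims then fall out immediately: the leading coefficient is $1>0$, so $\lim_{\lambda\to-\infty}f(a,a_0,\lambda)=+\infty$; and $f(a,a_0,0)=c_0=-a_0b_0(a-1)(b-1)\le 0$, since feasibility forces $a,b\ge 1$ and $a_0,b_0\ge 0$.

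For the root count I would pass to $f(a,a_0,-\lambda)=\lambda^4+(\chi-2)\lambda^3+c_2\lambda^2-c_1\lambda+c_0$ and count its positive roots, which correspond (with multiplicity) to the negative roots of $f$. Under feasibility the signs of all coefficients are pinned down: the leading coefficient is $+$; the $\lambda^3$ coefficient equals $\chi-2\ge 1>0$ because $\chi\ge 3$; the linear and constant coefficients satisfy $-c_1\le 0$ (as $c_1\ge 0$ termwise) and $c_0\le 0$; and the decisive middle coefficient is strictly negative. Establishing $c_2<0$ is the only substantive step, and I expect it to be the main (if modest) obstacle, since it is the one inequality that is not visible termwise. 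It follows from $pq=(a+a_0)(b+b_0)\ge ab$ together with $(a-1)(b-1)=ab-(a+b)+1$, which yields $c_2\le (a-1)(b-1)-ab=1-(a+b)=1-\chi\le -2<0$.

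With the signs in hand, the coefficient sign sequence of $f(a,a_0,-\lambda)$ is $+,\,+,\,-,\,\le 0,\,\le 0$ with the middle entry strictly negative; after deleting any vanishing coefficients this leaves a sequence of the form $+,\,+,\,-,\dots,-$ with exactly one sign change. Descartes' rule of signs then forces $f(a,a_0,-\lambda)$ to have exactly one positive root counted with multiplicity, equivalently $f(a,a_0,\lambda)$ has exactly one negative root counted with multiplicity. A pleasant feature of this uniform argument is that it sidesteps any case analysis over whether $a_0$, $b_0$, $a-1$, or $b-1$ happen to vanish, so no separate treatment of the boundary configurations is required.
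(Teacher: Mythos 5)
Your proof is correct and takes essentially the same approach as the paper's: expand $f(a,a_0,-\lambda)$ as a quartic, determine the coefficient signs, and apply Descartes' rule of signs to conclude there is exactly one positive root, hence exactly one negative root of $f$. The only difference is that you explicitly justify the strict negativity of the middle coefficient $c_2=(a-1)(b-1)-(a+a_0)(b+b_0)$, a step the paper asserts without comment, so your write-up is if anything slightly more complete.
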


\begin{proof}
Write $g(x):=f(a,a_0,-x)$. Let $b=\chi-a$ and $b_0=n-\chi-a_0$. A direct expansion gives
\[
\begin{aligned}
g(x)
&=x^2(x+ a - 1)(x+ b - 1)-((b + b_0)x+ b_0(b - 1))((a + a_0)x+ a_0(a - 1))\\
&=x^4+(a+b-2) x^3+\left((a-1)(b-1)-(a+a_0)(b+b_0)\right)x^2\\
&\qquad-\left((b+b_0)a_0(a-1)+(a+a_0)b_0(b-1)\right)x-a_0(a-1)b_0(b-1).
\end{aligned}
\]
Hence the coefficient signs of $g$ are
\[
+\ ,\quad \ge 0\ ,\quad -\ ,\quad \le 0\ ,\quad \le 0.
\]Ignoring possible zeros, the sign sequence has exactly one sign change (from $+$ to $-$). By Descartes' rule of signs~\cite{Wang2004Descartes}, $g$ has exactly one positive real root with multiplicity $1$. Therefore $f(a,a_0,\lambda)$ has exactly one real root (counted with multiplicity) in $(-\infty,0)$.

Finally, $f(a,a_0,0)=-b_0(b-1)\,a_0(a-1)\le 0$, and since the leading coefficient of $f$ is $+1$, we have $\lim_{\lambda\to-\infty}f(a,a_0,\lambda)=+\infty$.
\end{proof}

\section{Main results}

In this section we extend \cite[Theorems~3.1 and 3.2]{FYW12} to the full range \(3\le \chi\le n-1\).

\begin{thm}\label{thm:main1}
Let $G$ be a simple graph of order $n$ with chromatic number $\chi$ satisfying 
$3\le \chi\le n-1$. Then the least adjacency eigenvalue $\lambda_n$ satisfies
\[
\lambda_n \ge 
-\frac{n-\chi+2+\sqrt{(n-\chi-2)^2+4\chi(n-\chi)}}{4},
\]
with equality if and only if $G \cong \left(K_{\frac{\chi}{2}}\cup\tfrac{n-\chi}{2}K_1\right) \vee \left(K_{\frac{\chi}{2}}\cup\tfrac{n-\chi}{2}K_1\right)$, where both $n$ and $\chi$ are even.
\end{thm}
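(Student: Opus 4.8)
The plan is to combine the structural reduction of Lemma~\ref{lem:FYW21} with the root-location information of Lemma~\ref{lem:unique}, so that the whole statement collapses to a single polynomial inequality evaluated at a fixed number. Write $\lstar$ for the right-hand side of the claimed bound. By Lemma~\ref{lem:FYW21} it suffices to bound $\lambda_n(G(a,a_0))$ below by $\lstar$ over all feasible pairs $(a,a_0)$, and for each such pair this eigenvalue is, by Lemma~\ref{lem:unique}, the unique negative root of the quartic $f(a,a_0,\lambda)$ in~\eqref{eq:quartic}. First I would identify $\lstar$ explicitly: setting $a=b=\chi/2$ and $a_0=b_0=(n-\chi)/2$ turns $f$ into a difference of two squares, so it factors as
\[
f\Bigl(\tfrac{\chi}{2},\tfrac{n-\chi}{2},\lambda\Bigr)
=\Bigl[\lambda^2-\bigl(\chi+\tfrac{n-\chi}{2}-1\bigr)\lambda+\tfrac{n-\chi}{2}\bigl(\tfrac{\chi}{2}-1\bigr)\Bigr]
\Bigl[\lambda^2+\bigl(\tfrac{n-\chi}{2}+1\bigr)\lambda-\tfrac{n-\chi}{2}\bigl(\tfrac{\chi}{2}-1\bigr)\Bigr],
\]
and a short computation shows that the only negative root arises from the second factor and equals precisely $\lstar$.

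The key observation is that Lemma~\ref{lem:unique} pins down the sign of $f$ on either side of its negative root: since the leading coefficient is $+1$, since $f(a,a_0,0)\le 0$, and since there is a single negative root (hence necessarily simple), we have $f(a,a_0,\lambda)>0$ for $\lambda<\lambda_n(G(a,a_0))$ and $f(a,a_0,\lambda)<0$ for $\lambda_n(G(a,a_0))<\lambda<0$. Consequently
\[
\lambda_n(G(a,a_0))\ge \lstar
\quad\Longleftrightarrow\quad
f(a,a_0,\lstar)\ge 0,
\]
with equality on one side exactly when it holds on the other, because $f(a,a_0,\lstar)=0$ forces $\lstar$ to be the unique negative root. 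This converts the theorem into the purely algebraic claim that $f(a,a_0,\lstar)\ge 0$ for every feasible pair, with equality only at $a=\chi/2$, $a_0=(n-\chi)/2$, which is attainable in integers precisely when $n$ and $\chi$ are even.

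To prove this inequality I would fix $a$ and regard $\phi(a,a_0):=f(a,a_0,\lstar)$ as a quadratic polynomial in $a_0$, where $b=\chi-a$ and $b_0=n-\chi-a_0$. Its leading coefficient works out to $(b-1-\lstar)(a-1-\lstar)$, which is strictly positive because $\lstar<-1$; hence $\phi(a,\cdot)$ is an upward parabola, and it suffices to prove that its discriminant in $a_0$ is nonpositive for every $a\in[1,\chi-1]$, since this forces $\phi\ge 0$ on the entire relaxed domain. The graph isomorphism realized by the swap $(a,a_0)\leftrightarrow(\chi-a,\,n-\chi-a_0)$ makes both $\phi$ and this discriminant symmetric about $a=\chi/2$, so the single-variable discriminant $\Delta(a)$ is an even polynomial in $2a-\chi$ that vanishes at $a=\chi/2$; I therefore expect $\Delta(a)$ to be divisible by $(2a-\chi)^2$, with a complementary factor that is manifestly nonpositive on the interval. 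The main obstacle is exactly this last verification: after substituting the explicit value of $\lstar$ (or, more cleanly, using its defining relation $\lstar^2=\tfrac{n-\chi}{2}(\tfrac{\chi}{2}-1)-(\tfrac{n-\chi}{2}+1)\lstar$ to reduce the degree), one must carry out the algebra showing $\Delta(a)\le 0$ with equality only at $a=\chi/2$. Granting this, $\phi\ge 0$ everywhere with a unique real minimizer at the balanced point, so among feasible \emph{integer} pairs equality occurs iff $\chi/2$ and $(n-\chi)/2$ are integers; invoking Lemma~\ref{lem:FYW21} once more then identifies the extremal graph and completes the equality characterization.
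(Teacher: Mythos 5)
Your reduction is sound, and it is in fact the same route the paper takes: Lemma~\ref{lem:FYW21} plus Lemma~\ref{lem:unique} convert the theorem into the single polynomial inequality $f(a,a_0,\lstar)\ge 0$ over feasible pairs; your factorization of $f\bigl(\tfrac{\chi}{2},\tfrac{n-\chi}{2},\lambda\bigr)$ and the identification of $\lstar$ as the negative root of $\lambda^2+\bigl(\tfrac{n-\chi}{2}+1\bigr)\lambda-\tfrac{n-\chi}{2}\bigl(\tfrac{\chi}{2}-1\bigr)=0$ are correct; and "quadratic in $a_0$ with positive leading coefficient $(b-1-\lstar)(a-1-\lstar)$, show the discriminant is nonpositive" is exactly equivalent to the paper's completing-the-square in $s=a_0-\tfrac{n-\chi}{2}$. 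Your structural predictions are also right: the discriminant is divisible by $(2a-\chi)^2$, and your equality analysis (strict negativity of the discriminant off the balanced point, then a perfect square at $a=\chi/2$) matches the paper's.

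However, the proposal stops precisely where the real work lies, so there is a genuine gap. The nonpositivity of the discriminant for $1\le a\le \chi-1$ is not "manifest": you only \emph{expect} the cofactor of $(2a-\chi)^2$ to be nonpositive and then proceed by "granting this," so the decisive inequality is never established. Concretely, set $p=\tfrac{n-\chi}{2}$, $q=\tfrac{\chi}{2}-1$, $t=a-\tfrac{\chi}{2}$, $s=a_0-p$, $\mu=-\lstar$, $\beta=q+\mu$, $\alpha=p+2\mu$. A direct expansion (the paper's Claim~1), combined with the defining relation $\lstar(\lstar+1)=p(q-\lstar)$, gives
\[
f(a,a_0,\lstar)=(\beta^2-t^2)s^2+2p\beta t\,s+p\alpha t^2,
\]
whose discriminant in $s$ equals $4pt^2\bigl(\alpha t^2-2\mu\beta^2\bigr)$. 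So what must actually be proved is $\alpha q^2<2\mu\beta^2$ (which suffices for all $|t|\le q$), and this genuinely requires the relation $\mu^2=(1+p)\mu+pq$ satisfied by $\lstar$:
\[
2\mu\beta^2-\alpha q^2
=2\mu(q+\mu)^2-(p+2\mu)q^2
=2\mu^3+4q\mu^2-pq^2
=2\mu^3+4(1+p)q\mu+3pq^2>0.
\]
This one computation is the heart of the paper's proof (there it appears as $\alpha(\beta^2-q^2)-p\beta^2=2\mu^3+4(1+p)\mu q+3pq^2>0$). With it, your argument closes exactly as you outlined; without it, the proposal is a correct reduction and a correct conjecture about the algebra, but not a proof.
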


\begin{proof}
By Lemma~\ref{lem:FYW21}, to determine the extremal value of $\lambda_n(G)$ among all graphs of order $n$ and chromatic number $\chi$, it suffices to minimize the smallest real root of $f(a,a_0,\lambda)$ over all feasible pairs $(a,a_0)$. Our goal is to prove that, for every feasible pair $(a,a_0)$,
\begin{equation}\label{eq:least_text1}
\text{the smallest real root of }f\left(\tfrac{\chi}{2},\tfrac{n-\chi}{2},\lambda\right)
\ \text{is not greater than that of } f(a,a_0,\lambda).
\end{equation}
Let $\xi$ denote the smallest real root of $f\left(\tfrac{\chi}{2},\tfrac{n-\chi}{2},\lambda\right)$.
In view of Lemma~\ref{lem:unique}, each $f(a,a_0,\lambda)$ has exactly one real root
on $(-\infty,0)$; hence Eq.~\eqref{eq:least_text1} is equivalent to showing that
\begin{equation}\label{eq:least_text2}
f(a,a_0,\xi)\ \ge\ 0\qquad\text{for every feasible pair }(a,a_0).
\end{equation}
Indeed, if $f(a,a_0,\xi)\ge 0$, then $\xi$ lies to the left of (or coincides with) the unique negative root of $f(a,a_0,\lambda)$, so the smallest real root of $f(a,a_0,\lambda)$ is at least $\xi$.

Fix $n,\chi$ and write
\begin{equation}\label{eq:compute_LR_1}
p:=\frac{n-\chi}{2}>0,\quad q:=\frac{\chi}{2}-1>0,\quad M:=\frac{n}{2}\lambda,
\end{equation}
\begin{equation}\label{eq:compute_LR_2}
a=\frac{\chi}{2}+t,\quad b=\frac{\chi}{2}-t,\quad
a_0=p+s,\quad b_0=p-s,
\end{equation}
so that $|t|\le q$ and $|s|\le p$. Define
\[
\Delta(s,t;\lambda):=f(a,a_0,\lambda)-f\left(\tfrac{\chi}{2},\tfrac{n-\chi}{2},\lambda\right).
\]
The next claim is a purely algebraic identity.
\begin{claim}\label{clm:Delta-again}
For all $\lambda\in\mathbb{R}$,
\[
\Delta(s,t;\lambda)=p(p-2\lambda) t^2+(\lambda-q)^2 s^2-s^2t^2+2\lambda(\lambda+1) st.
\]
\end{claim}
\begin{proof}[Proof of Claim~\ref{clm:Delta-again}]
Write
\[
L:=(b+b_0)\lambda-b_0(b-1),\qquad R:=(a+a_0)\lambda-a_0(a-1),
\]
a direct calculation with \eqref{eq:compute_LR_1} and \eqref{eq:compute_LR_2} gives
\[
L=(M-pq)-st-\left(s(\lambda-q)+t(\lambda-p)\right),\quad
R=(M-pq)-st+\left(s(\lambda-q)+t(\lambda-p)\right),
\]
hence
\[
LR=(M-pq)^2-2(M-pq)st-\left[(\lambda-q)^2s^2+(\lambda-p)^2t^2+2(\lambda-q)(\lambda-p)st\right]+s^2t^2.
\]Using
\[
(\lambda-a+1)(\lambda-b+1)=(\lambda-q-t)(\lambda-q+t)=(\lambda-q)^2-t^2,
\]we know that\[
f(a,a_0,\lambda)=\lambda^2 \left[(\lambda-q)^2-t^2\right]-LR, \quad f\left(\tfrac{\chi}{2},\tfrac{n-\chi}{2},\lambda\right)=\lambda^2 (\lambda-q)^2 -(M-pq)^2, 
\] it follows that $\Delta(s,t;\lambda) = -\lambda^2 t^2 -LR + (M-pq)^2$. Now\[
\begin{aligned}
\Delta(s,t;\lambda)&= -\lambda^2 t^2 +2(M-pq)st +\left[(\lambda-q)^2s^2+(\lambda-p)^2t^2+2(\lambda-q)(\lambda-p)st\right]-s^2t^2
\\&=\left( (\lambda-p)^2-\lambda^2 \right)t^2 + (\lambda-q)^2s^2+2st \left[(M-pq)+(\lambda-p)(\lambda-q) \right] - s^2t^2.
\end{aligned}
\]Recall that $(\lambda-p)^2-\lambda^2=p(p-2\lambda)$ and \[
(M-pq)+(\lambda-p)(\lambda-q)=\lambda^2-(p+q-\tfrac{n}{2})\lambda=\lambda(\lambda+1),
\]we obtain \[
\Delta(s,t;\lambda)=p(p-2\lambda) t^2+(\lambda-q)^2 s^2-s^2t^2+2\lambda(\lambda+1) st,
\]as desired.
\end{proof}

Recall that $\xi$ is the smallest real root of the polynomial
\[
f\left(\tfrac{\chi}{2},\tfrac{n-\chi}{2},\lambda\right)=\lambda^2(\lambda-q)^2-(M-pq)^2.
\]
By Lemma~\ref{lem:unique}, for each feasible pair $(a,a_0)$, $f(a,a_0,\lambda)$ has exactly one real root on $(-\infty,0)$, and therefore $\xi<0$ is the unique negative root of $f\left(\tfrac{\chi}{2},\tfrac{n-\chi}{2},\lambda\right)$ characterized by
\begin{equation}\label{eq:xi-eqn}
\lambda(\lambda-q)=-(M-pq)\quad\Longleftrightarrow\quad \lambda^2+(1+p)\lambda-pq=0.
\end{equation}
From \eqref{eq:xi-eqn} we will use the convenient identity
\begin{equation}\label{eq:key-identity}
\xi(\xi+1)=p (q-\xi).
\end{equation}
To prove Eq.~\eqref{eq:least_text2}, it suffices to prove
\begin{equation}\label{eq:least_text3}
\Delta(s,t;\xi) \ge 0\qquad\text{for every feasible pair }(a,a_0).
\end{equation}
Define
\[
\mu:=-\xi>0,\qquad \alpha:=p+2\mu=p-2\xi>0,\qquad \beta:=q+\mu=q-\xi>0.
\]By Claim~\ref{clm:Delta-again} and \eqref{eq:key-identity}, we can rewrite
\[
\Delta(s,t;\xi)
=(\beta^2-t^2)s^2+2p\beta t s+p\alpha t^2.
\]
For each fixed $t$, this is a convex quadratic polynomial in $s$ with leading coefficient $\beta^2-t^2>0$ (since $\beta>q\ge |t|$). Completing the square gives
\begin{equation}\label{eq:Delta-min}
\Delta(s,t;\xi)
=(\beta^2-t^2)\left(s+\frac{p\beta t}{\beta^2-t^2}\right)^2
+\frac{pt^2}{\beta^2-t^2}\left[\alpha(\beta^2-t^2)-p\beta^2\right].
\end{equation}
Hence
\begin{equation}\label{eq:min-s_p1}
\min_{|s|\leq p}\Delta(s,t;\xi)\geq\min_{s\in\mathbb{R}}\Delta(s,t;\xi)
=\frac{pt^2}{\beta^2-t^2}\left[\alpha(\beta^2-t^2)-p\beta^2\right].
\end{equation}Thus, to prove Eq.~\eqref{eq:least_text3}, it suffices to prove
\begin{equation}\label{eq:least_text4}
\alpha(\beta^2-t^2)\geq p\beta^2 \qquad\text{for any }|t|\le q.
\end{equation}
In particular, it is enough to verify that $\alpha(\beta^2-q^2)-p\beta^2 \geq 0$. Using $\mu^2=(1+p)\mu+pq$ from \eqref{eq:xi-eqn}, we obtain
\[
\begin{aligned}
\alpha(\beta^2-q^2)-p\beta^2
&=(p+2\mu)\mu(2q+\mu)-p(q+\mu)^2=2\mu^3+4\mu^2 q-pq^2
\\
&=2\mu^3+ 4q((1 + p)\mu + pq)-pq^2\\
&=2\mu^3+4(1+p)\mu q+3pq^2\ >\ 0.
\end{aligned}
\]
Therefore $\min_{|s|\leq p}\Delta(s,t;\xi)\ge 0$ for all $|t|\le q$, this completes the proof of Eq.~\eqref{eq:least_text1}. The least root of the quadratic equation~\eqref{eq:key-identity} is\[
\xi=-\frac{n-\chi+2+\sqrt{(n-\chi-2)^2+4\chi(n-\chi)}}{4}.
\]By Eq.~\eqref{eq:least_text1}, it follows that for every feasible pair $(a,a_0)$, the smallest real root of $f(a,a_0,\lambda)$ is greater than or equal to $\xi$. Hence, by Lemma~\ref{lem:FYW21}, for all graphs of order $n$ with chromatic number $\chi$ satisfying $3\le \chi\le n-1$, the least adjacency eigenvalue $\lambda_n$ satisfies
\begin{equation}\label{eq:least_text100}
\lambda_n \ge 
\xi= -\frac{n-\chi+2+\sqrt{(n-\chi-2)^2+4\chi(n-\chi)}}{4}.
\end{equation}

Now we consider the equality condition. If $\lambda_n(G) = \xi$, then $G$ must be of the form $G(a,a_0)$ for some feasible integers $a$ and~$a_0$ by Lemma~\ref{lem:FYW21}. Hence the smallest real root of $f\!\left(\tfrac{\chi}{2},\tfrac{n-\chi}{2},\lambda\right)$ coincides with that of $f(a,a_0,\lambda)$, which forces $\Delta(s,t;\xi)=0$. Since $\alpha(\beta^2-q^2)-p\beta^2>0$, by \eqref{eq:min-s_p1} the only way for $\Delta(s,t;\xi)=0$ is to have $t=0$. Substituting $t=0$ into $\Delta(s,t;\xi)$ gives
\[
\Delta(s,0;\xi)=(\beta^2-0)s^2=\beta^2 s^2.
\]
Thus $\Delta(s,t;\xi)=0$ implies $s=0$ as well, i.e.\ $(s,t)=(0,0)$.
Equivalently, $a=\tfrac{\chi}{2}$ and $a_0=\tfrac{n-\chi}{2}$. 
This shows that the equality graph in~\eqref{eq:least_text100} is precisely 
\[
G\left(\tfrac{\chi}{2},\tfrac{n-\chi}{2}\right)
=\left(K_{\frac{\chi}{2}}\cup\tfrac{n-\chi}{2}K_1\right) \vee \left(K_{\frac{\chi}{2}}\cup\tfrac{n-\chi}{2}K_1\right),
\]
where both $n$ and $\chi$ are even.
\end{proof}

By the same argument as in the proof of \cite[Theorem~3.2]{FYW12}, 
we derive the following upper bound on the chromatic number in terms of its order and least adjacency eigenvalue.

\begin{thm}\label{thm:chi-upper}
Let $G$ be a graph of order $n$ with chromatic number $3\leq \chi \leq n-1$ and least adjacency eigenvalue $\lambda_n$.
Then
\begin{equation}\label{eq:chi-upper}
\chi \le \left(\frac{n}{2}+1+\lambda_n\right) +
\sqrt{\left(\frac{n}{2}+1+\lambda_n\right)^2 - 4(\lambda_n+1)\left(\lambda_n + \frac{n}{2}\right)},
\end{equation}
with equality if and only if $G \cong \left(K_{\frac{\chi}{2}}\cup\tfrac{n-\chi}{2}K_1\right) \vee \left(K_{\frac{\chi}{2}}\cup\tfrac{n-\chi}{2}K_1\right)$, where both $n$ and $\chi$ are even.
\end{thm}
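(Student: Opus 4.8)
The plan is to obtain \eqref{eq:chi-upper} purely by inverting the estimate of Theorem~\ref{thm:main1}, with no new spectral input. For fixed $n$ introduce the bivariate polynomial
\[
\Phi(x,\lambda):=x^{2}-\left(n+2+2\lambda\right)x+4(\lambda+1)\left(\lambda+\tfrac n2\right).
\]
Reading the right-hand side of \eqref{eq:chi-upper} as a function of $\chi$, one sees that it is precisely the larger root (in the variable $x$) of $\Phi(x,\lambda_n)=0$. Since $\Phi(\cdot,\lambda_n)$ is an upward parabola in $x$, the inequality \eqref{eq:chi-upper}, i.e.\ $\chi\le\chi_{+}$ where $\chi_{+}$ denotes that larger root, is \emph{equivalent} to the single scalar inequality $\Phi(\chi,\lambda_n)\le 0$. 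Thus the whole argument reduces to verifying $\Phi(\chi,\lambda_n)\le 0$ and then analysing when it vanishes.

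The conceptual crux is a polynomial identity tying $\Phi$ back to Theorem~\ref{thm:main1}. With $p=\tfrac{n-\chi}{2}$ and $q=\tfrac{\chi}{2}-1$ as in \eqref{eq:compute_LR_1}, a direct expansion yields, as an identity in $\lambda$,
\[
\Phi(\chi,\lambda)=4\bigl[\lambda^{2}+(1+p)\lambda-pq\bigr];
\]
that is, $\Phi(\chi,\cdot)$ is exactly four times the quadratic in $\lambda$ from \eqref{eq:xi-eqn}, merely read in a different variable. Hence, viewed as a quadratic in $\lambda$, $\Phi(\chi,\cdot)$ has leading coefficient $4$ and its two roots are $\xi$ (the root featured in Theorem~\ref{thm:main1}) and a companion root $\xi'$. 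Their product equals $-pq<0$ because $p,q>0$, so $\xi<0<\xi'$, and we obtain the factorisation
\[
\Phi(\chi,\lambda_n)=4(\lambda_n-\xi)(\lambda_n-\xi').
\]

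From here the bound is two lines. Theorem~\ref{thm:main1} gives $\lambda_n\ge\xi$, so $\lambda_n-\xi\ge 0$; and since $G$ contains an edge (as $\chi\ge 3$) we have $\lambda_n<0<\xi'$, so $\lambda_n-\xi'<0$. Therefore $\Phi(\chi,\lambda_n)\le 0$, which is exactly $\chi\le\chi_{+}$, i.e.\ \eqref{eq:chi-upper}. In particular $\Phi(\cdot,\lambda_n)$ attains a nonpositive value, so its discriminant is nonnegative and the square root in \eqref{eq:chi-upper} is real.

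It remains to pin down equality, which I expect to be the only delicate point because it is bookkeeping about roots rather than a clean sign argument. From the factorisation, $\Phi(\chi,\lambda_n)=0$ forces $\lambda_n\in\{\xi,\xi'\}$, and $\lambda_n<0<\xi'$ excludes $\xi'$, leaving $\lambda_n=\xi$; by the equality clause of Theorem~\ref{thm:main1} this happens exactly when $G$ is the stated join graph. The remaining subtlety is to confirm that in this case the chromatic number equals the \emph{larger} root $\chi_{+}$ and not the spurious smaller root $\chi_{-}$. This reduces to checking that $\chi$ lies at or to the right of the vertex $\tfrac12(n+2+2\xi)$ of $\Phi(\cdot,\xi)$, equivalently $\xi\le\chi-\tfrac n2-1$, which rearranges to
\[
\sqrt{(n-\chi-2)^{2}+4\chi(n-\chi)}\ \ge\ n+2-3\chi.
\]
This is immediate when the right-hand side is nonpositive, and otherwise (that is, when $n\ge 3\chi-1$) follows by squaring: the difference of the two sides' squares simplifies to $4\bigl(2n(\chi-1)-\chi(3\chi-4)\bigr)$, which is positive on this range. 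Combining the two implications yields equality in \eqref{eq:chi-upper} if and only if $G$ is the claimed graph, completing the plan.
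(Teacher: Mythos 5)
Your proposal is correct and takes essentially the same route as the paper: the paper obtains Theorem~\ref{thm:chi-upper} by inverting Theorem~\ref{thm:main1} ``by the same argument as in the proof of \cite[Theorem~3.2]{FYW12}'', which is exactly your quadratic-inversion argument, made self-contained by the identity $\Phi(\chi,\lambda)=4\bigl[\lambda^{2}+(1+p)\lambda-pq\bigr]$. One cosmetic caveat: $\chi\le\chi_{+}$ is not literally \emph{equivalent} to $\Phi(\chi,\lambda_n)\le 0$ (the former also allows $\chi<\chi_{-}$), but you only ever use the valid implication $\Phi(\chi,\lambda_n)\le 0\Rightarrow\chi\le\chi_{+}$, and your explicit verification that $\chi$ is the \emph{larger} root in the equality case supplies precisely the bookkeeping that the one-line deferral glosses over.
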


\section{A comparison with Wilf's bound}
As noted above, Wilf proved that $\chi(G)\le 1+\lambda_1(G)$. In fact, as proved by Wu and Elphick~\cite[Lemma~3.2]{WE17},
\[
\chi(G) \le \chi_\ell(G) \le \operatorname{col}(G) \le 1+\lambda_1(G),
\]
where $\chi_\ell(G)$ denotes the list chromatic number and $\operatorname{col}(G)$ denotes the coloring number. However, in Theorem~\ref{thm:chi-upper} one \emph{cannot} replace $\chi(G)$ by $\chi_\ell(G)$ or by $\operatorname{col}(G)$.

When $\chi = 2$, Theorem~\ref{thm:main1} simplifies to $\lambda_n \ge -n/2$, which was proved by Constantine \cite{C85} for all graphs. Theorem~\ref{thm:chi-upper} is therefore true when $\chi = 2$, and we can let $G=K_{3,3}$ for which $n=6$ and the least adjacency eigenvalue is $\lambda_n(G)=-3$.
Plugging $\lambda_n=-3$ and $n=6$ into the right-hand side of~\eqref{eq:chi-upper} gives
\[
\left(\frac{n}{2}+1+\lambda_n\right)+
\sqrt{\left(\frac{n}{2}+1+\lambda_n\right)^{2}-4(\lambda_n+1)\left(\lambda_n+\frac{n}{2}\right)}
=2.
\]
But since $\operatorname{col}(K_{3,3})\ge \chi_\ell(K_{3,3})=3$, if $\chi$ in Theorem~\ref{thm:chi-upper} were replaced by $\chi_\ell$ or $\operatorname{col}$, it would falsely assert that 
$\chi_\ell(K_{3,3})\le \operatorname{col}(K_{3,3})\le 2$. Therefore the chromatic number in Theorem~\ref{thm:chi-upper} cannot be replaced by the list chromatic number or the coloring number.

Fan, Yu and Wang~\cite{FYW12} demonstrated that their bound and Wilf's bound are incomparable. To exemplify this incomparability, for the connected graphs in the Wolfram Mathematica database of graphs with $n = 16$ and $\chi \ge 3$:

\begin{itemize}
\item Wilf's bound outperforms Fan--Yu--Wang's bound for 333 graphs;
\item Fan--Yu--Wang's bound outperforms Wilf's bound for 23 graphs; and
\item the bounds are equal for 3 graphs.
\end{itemize}
Fan--Yu--Wang's bound performs comparatively well, for example, on some circulant, complete multipartite, and cone graphs.

\section{Extremal graphs}
In addition to proving Theorem~\ref{thm:FYW}, Fan, Yu and Wang also proved the following result~\cite[Theorem~3.3]{FYW12}:
\begin{thm}[\cite{FYW12}]
Among all graphs of order $n$ and with chromatic number $\chi$, where $3 \leq \chi \leq n/2$, the graph $G(\lceil\chi/2\rceil, \lfloor(n - \chi)/2\rfloor)$ is the unique one whose least eigenvalue attains the minimum.
\end{thm}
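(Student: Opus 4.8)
The plan is to reuse the algebraic machinery of Theorem~\ref{thm:main1} essentially verbatim, but to re-anchor the comparison at the closest \emph{feasible integer} point $P^*:=\bigl(\lceil\chi/2\rceil,\lfloor(n-\chi)/2\rfloor\bigr)$ in place of the (generally non-integral) balanced point $(\chi/2,(n-\chi)/2)$. By Lemma~\ref{lem:FYW21} it suffices to minimize, over all feasible integer pairs $(a,a_0)$, the unique negative root of $f(a,a_0,\lambda)$ guaranteed by Lemma~\ref{lem:unique}; call it $\rho(a,a_0)$, and let $\rho^*:=\rho(P^*)<-1$. Writing each pair in the coordinates $a=\tfrac{\chi}{2}+t$, $a_0=\tfrac{n-\chi}{2}+s$ of~\eqref{eq:compute_LR_2}, the admissible $(s,t)$ form a shifted lattice: $t\in\tfrac{\chi}{2}+\Z$ with $|t|\le q$, and $s\in\tfrac{n-\chi}{2}+\Z$ with $|s|\le p$. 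The graph isomorphism $G(a,a_0)\cong G(b,b_0)$ corresponds to the symmetry $\Delta(-s,-t;\lambda)=\Delta(s,t;\lambda)$ of Claim~\ref{clm:Delta-again}. Exactly as in the passage from~\eqref{eq:least_text1} to~\eqref{eq:least_text2}, the assertion ``$\rho^*$ is minimal, uniquely attained at $P^*$'' is equivalent to
\[
f(a,a_0,\rho^*)\ \ge\ 0\qquad\text{for every feasible integer pair }(a,a_0),
\]
with equality only when $G(a,a_0)\cong G(P^*)$, since $f(a,a_0,\rho^*)\ge 0$ places $\rho^*$ at or to the left of the unique negative root $\rho(a,a_0)$.

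Because the identity $f(a,a_0,\lambda)=f\!\left(\tfrac{\chi}{2},\tfrac{n-\chi}{2},\lambda\right)+\Delta(s,t;\lambda)$ holds for all real arguments, subtracting the corresponding identity at $P^*$ and using $f(P^*,\rho^*)=0$ gives $f(a,a_0,\rho^*)=\Delta(s,t;\rho^*)-\Delta(s^*,t^*;\rho^*)$, where $(s^*,t^*)$ are the coordinates of $P^*$. The goal thus reduces to the lattice statement that $(s^*,t^*)$ \emph{minimizes} $D(s,t):=\Delta(s,t;\rho^*)$ over the admissible lattice. Here the four parity cases are governed by $t^*\in\{0,\tfrac12\}$ and $s^*\in\{0,-\tfrac12\}$, and it is instructive that the extremal $P^*$ is \emph{anti}-balanced: it pairs the larger clique part $\lceil\chi/2\rceil$ with the smaller independent part $\lfloor(n-\chi)/2\rfloor$, so $s^*t^*\le 0$. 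This sign is forced by the cross term $2\rho^*(\rho^*+1)\,st$ of Claim~\ref{clm:Delta-again}: since $\rho^*<-1$ we have $\rho^*(\rho^*+1)>0$, so taking $st<0$ lowers $D$, selecting $(s^*,t^*)$ among the (up to four) lattice points nearest the origin.

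To carry out the minimization I would again complete the square in $s$, as in~\eqref{eq:Delta-min}. The leading coefficient $(\rho^*-q)^2-t^2$ is positive because $(\rho^*-q)^2>q^2\ge t^2$, so for each fixed $t$ the map $s\mapsto D(s,t)$ is a convex parabola, and one reduces to a one-variable analysis in $t$. I would then establish the lattice minimality by unit-step (discrete-convexity) comparisons: show that moving a feasible pair one step in $s$ or in $t$ toward $(s^*,t^*)$ never increases $D$, so that a greedy descent terminates at $(s^*,t^*)$; the positivity of the reduced quadratic and the boundedness of the quartic coupling $-s^2t^2$ on the box control the far lattice points, and tracking strictness throughout yields uniqueness of the extremal graph. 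When both $\chi$ and $n-\chi$ are even, $(s^*,t^*)=(0,0)$ and this collapses to Theorem~\ref{thm:main1}.

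The main obstacle is that $\rho^*$ no longer obeys the clean relation~\eqref{eq:key-identity}, $\xi(\xi+1)=p(q-\xi)$, that was the engine of Theorem~\ref{thm:main1}; it is determined only implicitly by $f(P^*,\rho^*)=0$ and varies with the parity case through $(s^*,t^*)$. Consequently the cross-term coefficient $2\rho^*(\rho^*+1)$ is not simply $2p(q-\rho^*)$, and the convenient factorization $\alpha(\beta^2-t^2)-p\beta^2$ of~\eqref{eq:Delta-min} is lost. The delicate point will be to verify the discrete comparison among the nearest lattice points and to rule out gains at farther points; I expect this, after completing the square and clearing denominators, to reduce to a single polynomial inequality in $\rho^*,p,q$ valid precisely on the range $3\le\chi\le n/2$, in the spirit of the final chain $2\mu^3+4(1+p)\mu q+3pq^2>0$ at the end of the proof of Theorem~\ref{thm:main1}.
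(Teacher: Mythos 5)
Your reduction is sound as far as it goes: by Lemma~\ref{lem:FYW21} and Lemma~\ref{lem:unique} the theorem is equivalent to $f(a,a_0,\rho^*)\ge 0$ for every feasible pair, and your identity $f(a,a_0,\rho^*)=\Delta(s,t;\rho^*)-\Delta(s^*,t^*;\rho^*)$ correctly recasts this as the claim that $(s^*,t^*)$ minimizes $D(s,t):=\Delta(s,t;\rho^*)$ over the shifted lattice. But from that point on the proof does not exist: the discrete minimization --- the unit-step comparisons, the control of far lattice points, the ``single polynomial inequality in $\rho^*,p,q$'' --- is announced (``I would'', ``I expect'') rather than carried out, and that is exactly where all the difficulty lives. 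The proof of Theorem~\ref{thm:main1} runs entirely on the quadratic identity \eqref{eq:key-identity}, $\xi(\xi+1)=p(q-\xi)$, which is available only because the anchor is the exact center of symmetry, making $f\!\left(\tfrac{\chi}{2},\tfrac{n-\chi}{2},\lambda\right)$ a difference of squares. At your anchor $P^*$ with $(s^*,t^*)\neq(0,0)$ no such quadratic relation holds: $\rho^*$ is a root of a genuine quartic, the cross-term coefficient $2\rho^*(\rho^*+1)$ admits no clean elimination, and the factorization $\alpha(\beta^2-t^2)-p\beta^2$ that closes the proof of Theorem~\ref{thm:main1} is gone. You identify this obstacle yourself, but identifying it is not overcoming it. Note also that the proposed greedy descent is not an innocuous technicality: $D$ contains the concave coupling $-s^2t^2$ and a cross term whose conditional minimizer in $s$ moves with $t$, so ``one step toward $(s^*,t^*)$ never increases $D$'' is a genuine two-variable lattice inequality, not a consequence of convexity in each variable separately.

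It is also worth knowing that the paper does not prove this theorem either: it is quoted from Fan--Yu--Wang, whose proof (for the stated range $3\le\chi\le n/2$) is calculus-based, analyzing the dependence of the least root on the parameters rather than comparing lattice points algebraically. Moreover, Section~5 of the paper discusses precisely your strategy for the parity cases with $(s^*,t^*)\neq(0,0)$: the authors note that the least root then satisfies a quartic identity, that the verification splits into endpoint cases such as $t=\tfrac12,\tfrac32$ and $t=q$ requiring repeated eliminations, and that a concise unified algebraic proof covering all parity cases remains elusive. Your proposal therefore reproduces, as a plan, the very approach the authors describe as incomplete; to actually prove the statement you would need either to supply the missing lattice inequality in all parity cases or to follow the analytic route of the original paper.
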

Fan, Yu and Wang~\cite{FYW12} also conjectured that this theorem remains true for the full range $3\le \chi \le n-1$. This conjecture follows directly from Theorem~\ref{thm:main1} when both $n$ and $\chi$ are even. For the remaining parity cases, our fully algebraic approach appears to be less straightforward than the calculus-based approach used in \cite{FYW12}. At the symmetric point $(a,a_0)=(\tfrac{\chi}{2},\tfrac{n-\chi}{2})$, the characteristic polynomial can be factorized as a difference of squares, and thus the least root $\xi$ satisfies a \emph{quadratic equation}. Substituting this relation into $\Delta$ is straightforward, and the nonnegativity of $\Delta$ can then be easily verified. However, for instance, when both $\chi$ and $n$ are odd, we have $(a,a_0)=(\tfrac{\chi+1}{2},\tfrac{n-\chi}{2})$. In this case, the symmetry is broken, and the least root $\zeta$ instead satisfies a \emph{quartic equation}:
\[
\lambda^2(\lambda-q)^2-\left(\frac{n}{2}\lambda-pq\right)^2+\tfrac14 p (p-2\lambda)=0.
\]
Consequently, proving $\Tilde{\Delta}(s,t;\zeta):=f(a,a_0,\zeta)-f\left(\tfrac{\chi+1}{2},\tfrac{n-\chi}{2},\zeta\right) \ge0$ must be carried out over the half-integer lattice $t\in\tfrac12+\mathbb{Z}$, with separate treatment required for the endpoint cases $t=\tfrac12,\tfrac32,$ and $t=q$ (since we may assume without loss of generality that $a\ge b$, i.e., $t\ge0$). Each of these endpoint cases involves repeated eliminations using the above quartic identity, which is substantially more complicated than the algebraic substitution based on a quadratic equation. We believe the conclusion still holds, although a concise and unified algebraic proof covering all parity cases remains elusive.

\section{Concluding remarks}

In this paper, we resolved a conjecture of Fan, Yu and Wang by proving that their upper bound on the chromatic number also holds in the range $\frac{n}{2}<\chi\le n-1$. We also note that this is a natural high-chromatic regime. For example, it contains all complete multipartite graphs of order $n$ with $k$ nonempty parts, where $\frac{n}{2}<k\le n-1$, since such a graph has chromatic number $k$. More generally, let $H$ be a graph on $h$ vertices, and let $K_r\vee H$ denote the join of $K_r$ and $H$. Then $K_r\vee H$ has order $r+h$ and $\chi(K_r\vee H)=r+\chi(H)$, because the colors used on $K_r$ cannot be reused on $H$. Hence $K_r\vee H$ lies in the range $\chi>n/2$ whenever $r+\chi(H)>\frac{r+h}{2}$. If $H$ is not complete, then $K_r\vee H$ is also non-complete and has chromatic number at most $r+h-1$. Thus the extension from $3\le \chi\le n/2$ to the full range $3\le \chi\le n-1$ covers a broad and varied class of high-chromatic graphs, not only a small near-complete exceptional family.

The parameter $n/2$ in Theorem~\ref{thm:FYW} is closely related to the universal lower bound $\lambda_n\ge -n/2$. A related estimate of Powers~\cite{P89} gives $\lambda_n^2\le m$; indeed, \[ 2m=\sum_{i=1}^n\lambda_i^2\ge \lambda_1^2+\lambda_n^2\ge 2\lambda_n^2. \] Wu and Elphick~\cite{WE17} proved that 
\begin{equation} 
\chi(\chi - 1) \le (\lambda_1 + 1)\lambda_1 \le 2m. 
\end{equation}
By analogy with Theorem~\ref{thm:FYW}, this suggests the following conjecture, in which $m$ replaces $n/2$, $\chi(\chi - 1)$ replaces $\chi$, and $-\lambda_n^2$ replaces $\lambda_n$. Note that this conjectured bound is at most $2m$.

\begin{conj}\label{conj:clive}
For any non-empty graph $G$,\[
\chi(\chi - 1) \le (m + 1 - \lambda_n^2) + \sqrt{(m + 1 - \lambda_n^2)^2 - 4(\lambda_n^2 - 1)(\lambda_n^2 - m)}.
\]
\end{conj}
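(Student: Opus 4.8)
The plan is to recast the conjecture as an equivalent lower bound on the number of edges, which is far more transparent to work with. Write $X:=\chi(\chi-1)$ and $Y:=\lambda_n^2$, put $A:=m+1-Y$ and $B:=4(Y-1)(Y-m)$, and note that the conjectured right-hand side is the larger root $A+\sqrt{A^2-B}$ of $z^2-2Az+B$. Any non-empty graph contains an edge, so by interlacing with $K_2$ one has $\lambda_n\le-1$, hence $Y\ge1$; and Powers' inequality~\cite{P89} gives $Y\le m$. Consequently $B=4(Y-1)(Y-m)\le0$, so $z^2-2Az+B$ has one non-positive and one non-negative root, and since $X\ge0$ the conjecture is \emph{equivalent} to $X^2-2AX+B\le0$, i.e.\ to
\[
[\chi(\chi-1)]^2-2(m+1-\lambda_n^2)\,\chi(\chi-1)+4(\lambda_n^2-1)(\lambda_n^2-m)\le0 .
\]
As the coefficient of $m$ here is negative, this is in turn equivalent to
\[
m\ \ge\ \frac{[\chi(\chi-1)]^2+2\,\chi(\chi-1)\,(\lambda_n^2-1)+4\,\lambda_n^2(\lambda_n^2-1)}{2\bigl(\chi(\chi-1)+2\lambda_n^2-2\bigr)} .
\]
I would take this edge bound as the object of proof; one checks directly that equality holds for $G=K_\chi$ when $\chi\ge3$ and for $G=K_{a,b}$ when $\chi=2$, which identifies the expected extremal graphs.

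The available tools are Wilf's inequality in the form $\lambda_1\ge\chi-1$ (equivalently $\chi(\chi-1)\le\lambda_1(\lambda_1+1)$, the Wu--Elphick bound~\cite{WE17}), the moment identities $\sum_i\lambda_i=0$ and $\sum_i\lambda_i^2=2m$, and Powers' bound $\lambda_n^2\le m$. Combining $\lambda_1^2\ge(\chi-1)^2$ with $\lambda_1^2+\lambda_n^2\le2m$ gives the warm-up estimate $m\ge\tfrac12\bigl((\chi-1)^2+\lambda_n^2\bigr)$, and the first step would be to see how far this falls short and what must be added.

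The main difficulty is that this warm-up is genuinely too weak, for a structural reason: at the extremal graph $K_n$ the eigenvalue $\lambda_n=-1$ occurs with multiplicity $n-1$, so the discarded mass $\sum_{i\ge2}\lambda_i^2$ is exactly what makes the conjecture tight. A proof must therefore control the \emph{middle} of the spectrum, not merely $\lambda_1$ and $\lambda_n$. A second subtlety is that $\chi$ cannot be eliminated: deducing the conjecture purely from an inequality among $\lambda_1,\lambda_n,m$ via $\chi(\chi-1)\le\lambda_1(\lambda_1+1)$ produces a statement that is false for complete bipartite graphs --- where $\lambda_1=-\lambda_n$ and Wilf is far from tight --- even though the conjecture holds there with equality. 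I would accordingly separate $\chi=2$ (handled directly, as in~\cite{FYW12}, with $K_{a,b}$ extremal) from $\chi\ge3$, and for the latter exploit a $\chi$-critical subgraph of minimum degree at least $\chi-1$. To reach several eigenvalues at once, a promising avenue is interlacing for the quotient $\hat B=S^{\top}AS$ of a proper $\chi$-colouring, where $S$ is the normalized partition matrix: $\hat B$ is $\chi\times\chi$ with zero diagonal, $\operatorname{tr}\hat B=0$, its least eigenvalue lies in $[\lambda_n,0]$, and $\operatorname{tr}\hat B^2=\sum_{i\ne j}e(C_i,C_j)^2/(|C_i|\,|C_j|)$ reconnects the spectrum to the edge count. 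Converting this qualitative picture into the exact rational bound above --- with $K_n$ and $K_{a,b}$ emerging as the unique equality cases --- is, I expect, the crux; note also that, unlike the fixed-$n$ setting of Lemma~\ref{lem:FYW21}, fixing $m$ does not reduce the extremal problem to a finite-parameter family (the vertex count is unbounded and the edge-shifting operations behind that lemma do not preserve $m$), so the algebraic identity of Claim~\ref{clm:Delta-again} has no direct analogue here.
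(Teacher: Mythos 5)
The statement you were asked to prove is Conjecture~\ref{conj:clive}, which the paper itself does \emph{not} prove: it is left open, supported only by computational verification (all graphs of order at most $9$, and database graphs of order up to $100$) together with the observations that it is exact for $K_n$ and for complete bipartite graphs, and that the bipartite case is trivial. So there is no proof in the paper to compare yours against --- and your proposal does not close the gap either. By your own account, after the reformulation, converting the quotient-matrix picture into the exact rational edge bound ``is, I expect, the crux.'' A plan that stops at the crux is not a proof; what you have is a correct reduction plus a list of tools, with the entire case $\chi\ge 3$ unresolved.

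The portion you did carry out is sound, and it is worth saying precisely where it ends. Since a non-empty graph contains $K_2$, interlacing gives $\lambda_n\le -1$, and Powers' bound gives $\lambda_n^2\le m$; hence $4(\lambda_n^2-1)(\lambda_n^2-m)\le 0$, the quadratic $z^2-2(m+1-\lambda_n^2)z+4(\lambda_n^2-1)(\lambda_n^2-m)$ has one non-positive and one non-negative root, and since $\chi(\chi-1)\ge 0$ the conjecture is indeed equivalent to $[\chi(\chi-1)]^2-2(m+1-\lambda_n^2)\chi(\chi-1)+4(\lambda_n^2-1)(\lambda_n^2-m)\le 0$, i.e.\ to your rational lower bound on $m$ (the coefficient of $m$ is $-2(\chi(\chi-1)+2\lambda_n^2-2)<0$). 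Your equality checks for $K_\chi$ and $K_{a,b}$ are correct, and for $\chi=2$ your bound collapses exactly to Powers' inequality $m\ge\lambda_n^2$, which confirms the paper's remark that the bipartite case is trivial. Your negative observations are also accurate: the warm-up $m\ge\tfrac12((\chi-1)^2+\lambda_n^2)$ is strictly weaker than needed at $K_n$ for $n\ge 3$, and replacing $\chi(\chi-1)$ by $\lambda_1(\lambda_1+1)$ yields a false statement for $K_{a,b}$ (there the right-hand side equals $2$ while $\lambda_1(\lambda_1+1)=m+\sqrt{m}$). But none of this constitutes progress on the actual inequality for $\chi\ge 3$: the proposed interlacing with the colour-class quotient matrix is only a direction, with no argument that it produces the stated bound or the equality characterization. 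Your closing remark identifies the real obstruction you would need to overcome --- fixing $m$ instead of $n$ does not reduce the extremal problem to a finite-parameter family, so the mechanism of Lemma~\ref{lem:FYW21} and Claim~\ref{clm:Delta-again} driving Theorem~\ref{thm:main1} has no direct analogue --- which is presumably exactly why the authors left this statement as a conjecture.
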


Conjecture~\ref{conj:clive} is exact for $K_n$ and complete bipartite graphs, and the proof is trivial for bipartite graphs. We have verified Conjecture~\ref{conj:clive} for graphs of order at most 9, and graphs with order at most $100$ in the Wolfram Mathematica database. As for Theorem~\ref{thm:chi-upper}, the chromatic number in Conjecture~\ref{conj:clive} cannot be replaced by the list chromatic number or the coloring number. 

This edge-based conjecture typically performs better than the vertex-based bound proved in this paper. For example for the connected graphs in the Mathematica database with 16 vertices:

\begin{itemize}
    \item Conjecture~\ref{conj:clive} outperforms Fan--Yu--Wang's bound~\eqref{eq:chi-upper} for 384 graphs;
    \item Fan--Yu--Wang's bound~\eqref{eq:chi-upper} outperforms Conjecture~\ref{conj:clive} for 34 graphs; and
    \item the bounds are equal for 2 graphs.
\end{itemize}

\section*{Acknowledgements}

The authors thank Jie Ma for helpful comments and suggestions, and Hitesh Kumar for pointing out several typos in an earlier version of this paper. They are also grateful to the anonymous referees for their careful reading and valuable comments.


\begin{thebibliography}{99}
\bibitem{C85}
G.~Constantine,
Lower bounds on the spectra of symmetric matrices with nonnegative entries,
\emph{Linear Algebra Appl.} \textbf{65} (1985), 171--178.


\bibitem{ETZ26}
C.~Elphick, Q.~Tang and S.~Zhang,
A spectral lower bound on chromatic numbers using $p$-energy,
\emph{European J. Combin.} \textbf{132} (2026), 104252.


\bibitem{FYW12}
Y.-Z.~Fan, G.-D.~Yu and Y.~Wang,
The chromatic number and the least eigenvalue of a graph,
\emph{Electron. J. Combin.} \textbf{19} (2012), \#P39.


\bibitem{H70}
A.~J.~Hoffman,
On eigenvalues and colourings of graphs,
in: \emph{Graph Theory and its Applications} (B.~Harris, ed.),
Academic Press, New York, 1970, pp.~79--91.


\bibitem{P89}
D.~L.~Powers, 
Bounds on graph eigenvalues,
\emph{Linear Algebra Appl.} \textbf{117} (1989), 1--6.


\bibitem{St15}
Z.~Stani\'c,
\emph{Inequalities for Graph Eigenvalues},
Cambridge University Press, Cambridge, 2015.


\bibitem{Wang2004Descartes}
X. Wang, A simple proof of Descartes's rule of signs,
\emph{Amer. Math. Monthly} \textbf{111} (2004), 525--526.


\bibitem{W67}
H.~S.~Wilf,
The eigenvalues of a graph and its chromatic number,
\emph{J. Lond. Math. Soc.} \textbf{42} (1967), 330--332.


\bibitem{WE17}
B.~Wu and C.~Elphick,
Upper bounds for the achromatic and coloring numbers of a graph,
\emph{Discrete Appl. Math.} \textbf{217} (2017), 375--380.
\end{thebibliography}
\end{document}